\documentclass{amsart}
\usepackage{amsmath,amsfonts,amssymb,amscd,enumerate,mathtools, color, comment}

\newtheorem{theorem}{Theorem}[section]

\newtheorem{lemma}[theorem]{Lemma}

\numberwithin{equation}{section}

\title{The Ces\`{a}ro operator on local Dirichlet spaces}

\author{E. Dellepiane}
\address{%
Dipartimento di Matematica ``Federigo Enriques", Università degli studi di Milano, Milano, Italy 20133}
\email{eugenio.dellepiane@unimi.it}

\author{J. Mashreghi}
\address{%
D\'{e}partement de math\'{e}matiques et statistique,
Universit\'{e} Laval,
Qu\'{e}bec, QC,
Canada G1K 7P4}
\email{javad.mashreghi@mat.ulaval.ca}

\author{M. Nasri}
\address{Department of Mathematics and Statistics,
University of  Winnipeg,
Winnipeg, MB,
Canada R3B 2E9}
\email{m.nasri@uwinnipeg.ca}

\author{W. Verreault}
\address{Department of Mathematics,
University of Toronto,
Toronto, ON,
Canada M5S 2E4}
\email{william.verreault@utoronto.ca}

\subjclass[2020]{Primary: 47B38
; Secondary: 42B35
, 26D15
.}

\keywords{Ces\`{a}ro means, Operator norm, Local Dirichlet spaces}

\thanks{This work was supported by grants NSERC and the Canada Research Chairs program. The first author is a member of Gruppo Nazionale per l'Analisi Matematica, la Probabilità e le loro Applicazioni (GNAMPA) of Istituto Nazionale di Alta Matematica (INdAM)}

\begin{document}

\begin{abstract}
The family of Ces\`{a}ro operators $\sigma_n^\alpha$, $n \geq 0$ and $\alpha \in [0,1]$, consists of finite rank operators on Banach spaces of analytic functions on the open unit disc. In this work, we investigate these operators as they act on the local Dirichlet spaces $\mathcal{D}_\zeta$. It is well-established that they provide a linear approximation scheme when $\alpha > \frac{1}{2}$, with the threshold value $\alpha = \frac{1}{2}$ being optimal. We strengthen this result by deriving precise asymptotic values for the norm of these operators when $\alpha \leq \frac{1}{2}$, corresponding to the breakdown of approximation schemes. Additionally, we establish upper and lower estimates for the norm when $\alpha > \frac{1}{2}$.
\end{abstract}

\maketitle

\section{Introduction}

The classical Dirichlet space $\mathcal{D}$ \cite{MR3969961, MR3185375}, the Hardy spaces $H^p$, $0<p\leq \infty$, \cite{MR0268655, MR1669574, MR2500010}, and the Bergman space $A^2$ \cite{MR0507701, MR2033762, MR1758653} are the most well-known function spaces on the open unit disc $\mathbb{D}$. These spaces have been extended in several diverse directions. Our focus in this work is on local Dirichlet spaces $\mathcal{D}_\zeta$, for which we study the growth of generalized Ces\`{a}ro operators. For a brief history of Dirichlet spaces and a comprehensive study of local Dirichlet spaces, we refer to the monograph \cite{MR3185375}. Some definitions and elementary properties of Dirichlet spaces  needed in our discussion are gathered in Section \ref{S:wds}.

One of the fundamental facts in complex analysis is that every holomorphic function $f$ on the open unit disc $\mathbb{D}$ has the Taylor series expansion
\begin{equation}\label{E:intro-0}
f(z)= \sum_{k=0}^{\infty} a_{k} z^{k}, \qquad z \in \mathbb{D}.
\end{equation}
Therefore, in any polynomial approximation scheme in a Banach space $\mathcal{X}$ consisting of such analytic functions, it is natural to consider the Taylor polynomials
\begin{equation}\label{E:intro-4}
\left(S_{n} f\right)(z):=\sum_{k=0}^{n} a_{k} z^{k}, \qquad n \geq 0,
\end{equation}
and explore if $S_nf \to f$ in the ambient space $\mathcal{X}$. For example, it is straightforward to see that this holds in the Hardy--Hilbert space $H^2$, and in the classical Dirichlet space $\mathcal{D}$. Moreover, it is a deep result of Hardy--Littlewood \cite{HL1, MR6581} that it is also a valid approximation scheme in $H^p$-spaces for $1<p<\infty$. However, this natural approximation method fails in some settings such as the disc algebra $\mathcal{A}$ and the Hardy space $H^1$.

As a first alternative to Taylor polynomials \eqref{E:intro-4}, the weighted versions
\begin{equation}\label{E:intro-4-cesaro}
\left(\sigma_{n} f\right)(z) := \sum_{k=0}^{n} \left( 1-\frac{k}{n+1} \right) a_{k} z^{k}, \qquad n \geq 0,
\end{equation}
known as Ces\`{a}ro means or Fej\'{e}r polynomials, were considered. In fact, Hardy--Littlewood demonstrated that $\sigma_{n} f \to f$ holds in both the disc algebra $\mathcal{A}$ and the Hardy space $H^1$. They even considered more sophisticated means which we do not discuss in this work. Several decades later, recognizing that $S_nf$ may not be suitable for super-harmonically weighted Dirichlet spaces $\mathcal{D}_w$, Mashreghi--Ransford \cite{MR4032210} established a similar result for $\mathcal{D}_w$. More explicitly, it was demonstrated that while there are cases where $\|S_nf-f\|_{\mathcal{D}_w} \not\to 0$, the approximation scheme $\|\sigma_nf-f\|_{\mathcal{D}_w} \to 0$ remains valid.

In \cite{MR4179962}, the generalized Ces\`{a}ro means \[
(\sigma_n^{\alpha} f)(z)=\binom{n+\alpha}{\alpha}^{-1}\sum_{k=0}^n\binom{n-k+\alpha}{\alpha}a_kz^k
\]
were considered on super-harmonically weighted Dirichlet spaces. Here, $\alpha$ is a parameter in the interval $[0,1]$, and notice that for $\alpha=0$ and $\alpha=1$ one recovers, respectively, $S_n$ and $\sigma_n$. It was shown that the approximation $\|\sigma_n^\alpha f-f\|_{\mathcal{D}_w} \to 0$ is valid for every weight $\omega$ if and only if $\alpha>\frac{1}{2}$. In this work, we are interested in the asymptotic behavior of the norm of $\sigma_n^\alpha$ on super-harmonically weighted Dirichlet spaces, as $n\to\infty$. The case $\alpha \leq \frac{1}{2}$, where the approximation fails, is particularly more interesting.

The structure of the paper is as follows. In Section \ref{S:wds}, we provide a brief overview of some technical results necessary for presenting our main findings. We commence with the definition of weighted Dirichlet spaces, with particular emphasis on local Dirichlet spaces. Following that, we introduce the concept of Hadamard products, complemented by Theorem \ref{T:norm-hadamard-h} from \cite{MR4032210}, which plays a significant role in our studies. This section concludes with a discussion on generalized Cesàro means $\sigma_{n}^{\alpha}$. For more detailed information on these operators, we refer to \cite{MR1188874}. Section \ref{S:main-results} presents our main findings, consisting of three theorems: Theorems \ref{T:alpha<12}, \ref{T:alpha=12}, and \ref{T:alpha>12}. Given the calculatory nature and length of the proofs, Section \ref{S:estimation-Lemma} compiles several estimation results that are noteworthy in their own right. However, these estimations also play a crucial role in proving the main results, which are detailed in Section \ref{S:proofs}. Finally, the paper concludes with a brief section containing some concluding remarks.

\section{Technical Results} \label{S:wds}

\subsection{Local Dirichlet Spaces}
The classical \textit{Dirichlet space} $\mathcal{D}$ consists of functions $f\in \operatorname{Hol}(\mathbb{D})$  for which
\[
\mathcal{D}(f):=\frac{1}{\pi} \int_{\mathbb{D}}\left|f^{\prime}(z)\right|^{2} d A(z) < \infty.
\]
Here, $dA$ denotes the  Lebesgue area measure in $\mathbb{D}$.  S. Richter \cite{MR936999} introduced the harmonically weighted and A. Aleman \cite{AL} the superharmonically weighted Dirichlet spaces. A special but very important role is played by local Dirichlet spaces consisting of functions  $f \in \operatorname{Hol}(\mathbb{D})$ with $\mathcal{D}_{\zeta}(f)<\infty$, where
\[
\mathcal{D}_{\zeta}(f)=
\begin{cases}
\displaystyle\int_{\mathbb{D}} \log \left|\frac{1-\bar{\zeta} z}{\zeta-z}\right| \frac{2}{1-|\zeta|^{2}}\left|f^{\prime}(z)\right|^{2} d A(z), & \zeta \in \mathbb{D}, \\
\\
\displaystyle\int_{\mathbb{D}} \frac{1-|z|^{2}}{|\zeta-z|^{2}}\left|f^{\prime}(z)\right|^{2} d A(z), & \zeta \in \mathbb{T}.
\end{cases}
\]
In fact, by a very influential result of Richter--Sundberg \cite{MR1116495}, an  $f \in \mbox{Hol}(\mathbb{D})$ is in $\mathcal{D}_{\zeta}$ if and only if there is a $g \in H^{2}$ and $a\in\mathbb{C}$ such that
\begin{equation}\label{DirichleHardy}
f(z)=a+(z-\zeta) g(z), \qquad z \in \mathbb{D},
\end{equation}
and, moreover, we have the more practical formula
\begin{equation}\label{DirichleHardy-2}
\mathcal{D}_{\zeta}(f)=\|g\|_{H^{2}}^{2}
\end{equation}
to calculate the local Dirichlet integral. See \cite[Chapter IV]{MR3185375}.

\subsection{Hadamard Multipliers}
The \textit{Hadamard product} of two formal power series $f(z):=\sum_{k=0}^{\infty} a_{k} z^{k}$ and $g(z):=\sum_{k=0}^{\infty} b_{k} z^{k}$ is the formal power series given by the formula $(f * g)(z):=\sum_{k=0}^{\infty} (a_{k} b_{k}) z^{k}$. It is trivial that if either $f$ or $g$ is a polynomial, then $f * g$ is also a polynomial. For a sequence of complex numbers $(c_{k})_{k\in\mathbb{N}}$, write $T_{c}$ for the infinite matrix
\begin{equation}\label{E:Definition of $T_c$}
 T_{c}:=\left(\begin{array}{ccccc}
c_{1} & c_{2}-c_{1} & c_{3}-c_{2} & c_{4}-c_{3} & \ldots \\
0 & c_{2} & c_{3}-c_{2} & c_{4}-c_{3} & \cdots \\
0 & 0 & c_{3} & c_{4}-c_{3} & \ldots \\
0 & 0 & 0 & c_{4} & \ldots \\
\vdots & \vdots & \vdots & \vdots & \ddots
\end{array}\right).
\end{equation}
If this matrix is a bounded operator on $\ell^{2}$, we denote its operator norm by $\left\|T_{c}\right\|_{\ell^2\to\ell^2}$. For convenience of notation, if  $(c_{k})_{k\in\mathbb{N}}$ is the sequence of the Taylor coefficients of an analytic function $h$, or even of a formal power series $h(z)=\sum_{k=0}^{\infty} c_{k} z^{k}$, we also write $T_{h}$ in place of $T_{c}$. Note that the term $c_0$ does not appear in the matrix $T_c$: this is consistent with the fact that the Dirichlet integrals annihilate the constants.

Here is a simple observation about eigenvalues of $T_c$. As a matter of fact, we will just use a special case of \eqref{eq-P:eigenvalues}, corresponding to $k=1$, which also follows from more substantial results in \cite{MR4032210}.

\begin{lemma} \label{P:eigenvalues}
Let $(c_{k})_{k\geq 1}$ be a sequence of complex numbers. Then $c_k$, $k \geq 1$, is an eigenvalue of the matrix $T_c$, with corresponding eigenvector
\[
\mathbf{v}_k =\sum_{i=1}^{k} \mathbf{e}_i,
\]
where $\mathbf{e}_i$ is the sequence given by $\mathbf{e}_i(j) = \delta_{ij}$, the Kronecker delta, for $j\in\mathbb{N}$. In particular, if $T_c$ is bounded on $\ell^2$,
\begin{equation}\label{eq-P:eigenvalues}
|c_k| \leq \|T_c\|_{\ell^2\to\ell^2}, \qquad k \geq 1.
\end{equation}
\end{lemma}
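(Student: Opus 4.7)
The plan is to verify the eigenvalue/eigenvector claim by a direct matrix-vector computation, and then to read off \eqref{eq-P:eigenvalues} as an immediate consequence. The key observation is that the off-diagonal entries of $T_c$ in each row are constant as one moves rightward, except that they are differences $c_{j}-c_{j-1}$, which telescope when summed over a block of consecutive columns. This is exactly the structure that $\mathbf{v}_k$ (the indicator of the first $k$ positions) exploits.

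First I would fix notation for the $(i,j)$-entry of $T_c$: it equals $0$ for $i>j$, $c_i$ for $i=j$, and $c_j-c_{j-1}$ for $i<j$. Given this, I would compute $(T_c \mathbf{v}_k)_i$ as the sum of the first $k$ entries of row $i$, splitting into three cases. If $i>k$, all relevant entries are below the diagonal, hence zero. If $i=k$, the only nonzero contribution comes from the diagonal term $c_k$. If $i<k$, the sum equals
\[
c_i + \sum_{j=i+1}^{k} (c_j - c_{j-1}) = c_i + (c_k - c_i) = c_k
\]
by telescoping. Therefore $(T_c \mathbf{v}_k)_i = c_k$ for $1\le i\le k$ and $0$ for $i>k$, which is precisely $c_k\,\mathbf{v}_k$.

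For \eqref{eq-P:eigenvalues}, I would note that $\mathbf{v}_k \in \ell^2$ with $\|\mathbf{v}_k\|_{\ell^2} = \sqrt{k} \neq 0$, so if $T_c$ is bounded the standard operator-norm inequality $\|T_c \mathbf{v}_k\|_{\ell^2} \le \|T_c\|_{\ell^2\to\ell^2}\|\mathbf{v}_k\|_{\ell^2}$ combined with $T_c\mathbf{v}_k = c_k\mathbf{v}_k$ gives $|c_k|\sqrt{k} \le \|T_c\|_{\ell^2\to\ell^2}\sqrt{k}$, hence $|c_k|\le \|T_c\|_{\ell^2\to\ell^2}$.

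There is no real obstacle here: the entire content is the telescoping identity, and the subtlety, if any, is purely in getting the indexing of $T_c$ right (in particular that the column indexing starts at $1$ and that no $c_0$ appears, as emphasized in the paragraph preceding the lemma). Once the entries are written out correctly, the three-case computation is mechanical and the norm bound is immediate.
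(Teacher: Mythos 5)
Your proof is correct and matches the paper's in substance: the paper verifies $T_c\mathbf{v}_k = c_k\mathbf{v}_k$ by induction on $k$ (adding the $(k+1)$-th column of $T_c$ at each step), whereas you sum each row directly and telescope, but this is the same elementary computation organized differently, and your case analysis and indexing are right. The deduction of \eqref{eq-P:eigenvalues} from the eigenvector relation is immediate in both versions.
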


\begin{proof}
We prove by induction that $T_c \mathbf{v}_k = c_k \mathbf{v}_k$. For $k=1$, the result is trivial. Assume that it holds for $k\in\mathbb{N}$. Then
\begin{align*}
T_c \mathbf{v}_{k+1} &= T_c \mathbf{v}_k + T_c \mathbf{e}_{k+1} = c_k \mathbf{v}_k + \sum_{i=1}^k (c_{k+1}-c_{k})\mathbf{e}_{i} + c_{k+1} \mathbf{e}_{k+1} \\
&= c_k \mathbf{v}_k + (c_{k+1}-c_{k}) \mathbf{v}_k +c_{k+1}\mathbf{e}_{k+1} = c_{k+1}\mathbf{v}_{k+1}. \qedhere
\end{align*}
\end{proof}

The following central result is needed in our discussion.

\begin{theorem}[\cite{MR4032210}] \label{T:norm-hadamard-h}
The function $h$ is a Hadamard multiplier of $\mathcal{D}_{\omega}$, for all superharmonic weight $\omega$, if and only if $T_{h}$ acts as a bounded operator on $\ell^{2}$. Moreover, in this case, the estimate
\begin{equation} \label{MasRanThm11}
\mathcal{D}_{\omega}(h * f) \leq\left\|T_{h}\right\|_{\ell^2\to\ell^2}^{2} \mathcal{D}_{\omega}(f)
\end{equation}
holds, with the constant $\left\|T_{h}\right\|_{\ell^2\to\ell^2}^{2}$ being optimal.
\end{theorem}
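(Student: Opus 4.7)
The plan is to reduce the problem to local Dirichlet spaces $\mathcal{D}_\zeta$, where the Richter--Sundberg identification (\ref{DirichleHardy})--(\ref{DirichleHardy-2}) between $\mathcal{D}_\zeta$ and $H^2$ turns the norm estimate into a concrete bound on an operator on $\ell^2$. Fix $\zeta \in \mathbb{T}$ and write $f(z) = a_0 + \sum_{k \geq 1} a_k z^k$. Solving the recurrence $a_k = b_{k-1} - \zeta b_k$ for $g = \sum b_k z^k \in H^2$ (using $b_k \to 0$) yields $B_k := b_{k-1} = \sum_{j \geq k} a_j \zeta^{j-k}$ for $k \geq 1$, so that (\ref{DirichleHardy-2}) becomes $\mathcal{D}_\zeta(f) = \sum_{k \geq 1} |B_k|^2$.

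The heart of the argument is a short algebraic computation. Defining $B_k'$ analogously for $h * f$, I would substitute $a_j = B_j - \zeta B_{j+1}$ into the sum defining $B_k'$ and reorganize by Abel summation to reach
\begin{equation*}
B_k' = c_k B_k + \sum_{j > k} (c_j - c_{j-1})\, \zeta^{j-k}\, B_j,
\end{equation*}
which is exactly $(D_\zeta^{-1} T_h D_\zeta B)_k$ for the diagonal unitary $D_\zeta := \mathrm{diag}(\zeta, \zeta^2, \ldots)$ on $\ell^2$. Since $|\zeta| = 1$, the Hadamard multiplier $f \mapsto h * f$ on $\mathcal{D}_\zeta$ is unitarily equivalent to $T_h$ on $\ell^2$, and one immediately reads off
\begin{equation*}
\mathcal{D}_\zeta(h * f) \leq \|T_h\|_{\ell^2 \to \ell^2}^{2}\, \mathcal{D}_\zeta(f),
\end{equation*}
with equality approachable by choosing $B$ so as to nearly saturate $\|T_h\|_{\ell^2 \to \ell^2}$.

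To lift the estimate to an arbitrary superharmonic weight $\omega$, I would invoke the Riesz decomposition, writing $\omega$ as the sum of a Poisson integral of a positive boundary measure and a Green potential of a positive interior measure. This yields an integral representation $\mathcal{D}_\omega(f) = \int_{\overline{\mathbb{D}}} \mathcal{D}_\zeta(f)\, d\mu_\omega(\zeta)$ for some positive Borel measure $\mu_\omega$; applying the pointwise estimate and integrating then gives the desired bound. Sharpness of the constant $\|T_h\|_{\ell^2 \to \ell^2}^{2}$ follows by localizing $\omega$ (for instance $\omega(z) = P(z,\zeta)$ for a fixed boundary point $\zeta$) and invoking the unitary equivalence above. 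Conversely, if $T_h$ is not bounded on $\ell^2$, the same unitary equivalence shows that $h$ fails to be a Hadamard multiplier of $\mathcal{D}_\zeta$ for every $\zeta \in \mathbb{T}$, and hence fails for any weight harmonic at such a $\zeta$, proving the ``only if'' direction.

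The main obstacle I anticipate is the interior case $\zeta \in \mathbb{D}$ of the integral representation of $\mathcal{D}_\omega$, where the Dirichlet-type weight is expressed through the Green's function rather than through a Poisson-type kernel, and so the clean Richter--Sundberg identity (\ref{DirichleHardy-2}) does not apply verbatim. Handling this piece will likely require either a separate coefficient-level expansion of $\mathcal{D}_\zeta(f)$ for $\zeta \in \mathbb{D}$ or a fibration argument expressing the interior integral as a weighted superposition of boundary ones. A subsidiary subtlety is the sharpness claim, which demands that an almost-extremal vector for $T_h$ on $\ell^2$ be realized by genuine test functions in $\mathcal{D}_\omega$ under a judiciously chosen weight $\omega$.
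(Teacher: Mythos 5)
First, a point of order: the paper does not prove Theorem \ref{T:norm-hadamard-h} at all; the result is imported from \cite{MR4032210}, so there is no in-paper proof to compare yours against. Judged as a reconstruction of the argument in that reference, your outline follows the right route. The boundary computation is correct: for $\zeta\in\mathbb{T}$ the tails $B_k=\sum_{j\geq k}a_j\zeta^{j-k}$ satisfy $\mathcal{D}_\zeta(f)=\sum_{k\geq1}|B_k|^2$ by \eqref{DirichleHardy-2}, the Hadamard multiplier acts on them as $D_\zeta^{-1}T_hD_\zeta$ (matching the convention for $T_c$), and the map $f\mapsto(B_k)$ is onto $\ell^2$ (take $g=\sum_{k\geq0}B_{k+1}z^k$ and $f=(z-\zeta)g$), which is what makes the sharpness claim and the ``only if'' direction work at a single boundary point. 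The averaging of $\mathcal{D}_\omega$ over local Dirichlet integrals via the Riesz decomposition of the superharmonic weight is likewise the correct lifting mechanism.

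The gap you flag at interior points $\zeta\in\mathbb{D}$ is genuine and cannot be waved away: Green potentials carry interior mass (already $\omega(z)=\log(1/|z|)$ is superharmonic with Riesz measure a point mass at the origin), so the pointwise bound $\mathcal{D}_\zeta(h*f)\leq\|T_h\|_{\ell^2\to\ell^2}^2\,\mathcal{D}_\zeta(f)$ must also be proved for $\zeta\in\mathbb{D}$ before you may integrate against $d\mu_\omega$. Two things happen there. The good news is that the local Douglas formula $\mathcal{D}_\zeta(f)=\bigl\|(f-f(\zeta))/(z-\zeta)\bigr\|_{H^2}^2$ persists for interior $\zeta$ with the normalization of the paper, so your identity $\mathcal{D}_\zeta(f)=\sum_{k\geq1}|B_k(\zeta)|^2$ and the matrix representation $B'_k=\sum_{j\geq k}(T_h)_{kj}\zeta^{j-k}B_j$ survive verbatim, and surjectivity onto $\ell^2$ still holds. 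The bad news is that $D_\zeta=\mathrm{diag}(\zeta,\zeta^2,\ldots)$ is then a contraction with unbounded inverse, so the ``unitary equivalence'' collapses and $\|D_\zeta^{-1}T_hD_\zeta\|\leq\|T_h\|$ is not automatic. The missing ingredient is to note that, since $T_h$ is upper triangular, the entrywise factor $\zeta^{j-k}$ ($j\geq k$) agrees with $|\zeta|^{|j-k|}e^{i(j-k)\arg\zeta}$, the Fourier coefficients of a rotated Poisson kernel; hence
\[
\bigl((T_h)_{kj}\zeta^{j-k}\bigr)_{k,j}=\frac{1}{2\pi}\int_{-\pi}^{\pi}P_{|\zeta|}(\theta-\arg\zeta)\,U_\theta^*T_hU_\theta\,d\theta,
\qquad U_\theta=\mathrm{diag}(e^{i\theta},e^{2i\theta},\ldots),
\]
an average of unitary conjugates of $T_h$ against a probability measure, whose norm is therefore still at most $\|T_h\|_{\ell^2\to\ell^2}$. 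With that lemma (or the equivalent monotone-weight argument used in \cite{MR4032210}) supplied, the integration over $d\mu_\omega$ closes the upper bound in \eqref{MasRanThm11}, and the remainder of your outline (sharpness by localizing at a boundary weight, and the converse) is sound.
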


To be more precise, we introduce the quantity
\begin{equation}\label{E:def-TdwtoDw}
\|T_h\|^2_{\mathcal{D}_\omega \to \mathcal{D}_\omega} := \sup_f \frac{\mathcal{D}_\omega(h\ast f)}{\mathcal{D}_\omega(f)},
\end{equation}
where $\omega$ is a superharmonic weight and the supremum is taken over all non-constant $f\in \mathcal{D}_\omega$. Hence, we know that, for each weight $\omega$,
\[
\|T_h\|_{\mathcal{D}_\omega \to \mathcal{D}_\omega} \leq \|T_h\|_{\ell^2 \to \ell^2},
\]
and the sharpness of the constant $\|T_h\|_{\ell^2\to\ell^2}$ means that when we take another supremum with respect to all weights $\omega$, we obtain
\[
\sup_{\omega} \|T_h\|_{\mathcal{D}_\omega \to \mathcal{D}_\omega} = \|T_h\|_{\ell^2 \to \ell^2}.
\]
In \cite{MR4032210}, it is shown that this supremum is attained by choosing the harmonic weight
\[
\omega_1(z) := \frac{1-|z|^2}{|1-z|^2}, \qquad z\in\mathbb{D}.
\]
The corresponding Dirichlet space is the local Dirichlet space $\mathcal{D}_1$. Our goal is to further analyze
\[\|T_h\|:=\|T_h\|_{\ell^2\to\ell^2} = \sup_f \frac{\mathcal{D}_1(h\ast f)}{\mathcal{D}_1(f)}\]
for the special class of polynomials $h$ that give rise to the generalized Ces\`{a}ro means. Proper estimation of $\left\|T_{h}\right\|$ is crucial in applications.

\subsection{The Generalized Ces\`{a}ro Means} \label{S:ces}
For the rest of this work, $n>1$ will be a fixed natural number and $\alpha$ a real number in the interval $[0,1]$. For a power series $f(z)=\sum_{k=0}^\infty a_kz^k$, the generalized Ces\`{a}ro operator $\sigma_n^\alpha$ acts on $f$ as
\[
(\sigma_n^{\alpha} f)(z)=\binom{n+\alpha}{\alpha}^{-1}\sum_{k=0}^n\binom{n-k+\alpha}{\alpha}a_kz^k,
\]
where the generalized binomial coefficient is defined for a pair of real numbers $x,y$ with $x>y>-1$ by
\[
\binom{x}{y}:=\frac{\Gamma(x+1)}{\Gamma(y+1)\Gamma(x-y+1)}.
\]
Here, $\Gamma$ denotes the Gamma function. For simplicity, we set
\begin{equation} \label{E:defck}
c_k = c_{k,n}^\alpha = \binom{n+\alpha}{\alpha}^{-1}\binom{n-k+\alpha}{\alpha}, \qquad k = 0,\ldots,n,
\end{equation}
and $c_k=0 $ otherwise. In the language of Hadamard products, writing $h_n^{\alpha}(z) = \sum_{k=0}^n c_k z^k$ and letting $T_{h_n^{\alpha}}$ act on the formal power series, we see that $\sigma_n^{\alpha}$ is nothing but the operator $T_{h_n^{\alpha}}$ that we introduced before. Hence, we may use  $\|\sigma_n^\alpha\|=\|T_{h_n^{\alpha}}\|$ to explore the behavior of Ces\`{a}ro means.

Let us pay more attention to two special cases.
\begin{enumerate}[(i)]
\item For $\alpha=0$ and $n\in\mathbb{N}$, the coefficients $c_k$ are  equal to $1$ for $k=0,\ldots, n$ and then they jump to zero for $k>n$. Hence, $\sigma_n^0$ is precisely equal to the $n$-th partial sum operator
\[
S_n f(z) = \sum_{k=0}^n a_k z^k,
\]
which is the Hadamard product of the Dirichlet kernel $D_n$ with $f$.  We know that $\|\sigma_n^0\|^2=n+1$ with the maximizing function $f(z)=nz^{n+1}-(n+1)z^n+1$.

\item For $\alpha=1$, we have that $\sigma_n^1=\sigma_n$ is the Ces\`{a}ro operator
\[
\sigma_n(f)= \sum_{k=0}^n\left(1-\frac{k}{n+1}\right)a_kz^k,
\]
which satisfies $\sigma_n(f)=K_n\ast f$, where $K_n$ is the classical Fej\'{e}r kernel. We have $\|\sigma_n^1\|^2=n/(n+1)$ with maximizing function $f(z)=z^{n+1}-(n+1)z+n$.
\end{enumerate}
See \cite{MNW1, MR4507242, MR4032210} for further detail.

\section{Main Results} \label{S:main-results}
Our main concern in this work is to estimate, as precisely as possible, the quantity $\|\sigma_n^\alpha\|$ for different values of the parameter $\alpha$. As mentioned above, the value $\alpha=1/2$ is a threshold point, and that is why in the following we have three theorems, with different flavors, about the behavior of $\|\sigma_n^\alpha\|$ corresponding to whether $\alpha>1/2$, $\alpha=1/2$, or $\alpha<1/2$. In the following, the notation $f(n)\sim g(n)$ means that
\[
\lim_{n\to\infty}\frac{f(n)}{g(n)}=1.
\]

\begin{theorem} \label{T:alpha<12}
Let $\alpha < \frac{1}{2}$. Then
\[
\|\sigma_n^\alpha\| \sim C_\alpha n^{\frac{1}{2}-\alpha},
\]
where
\[
C_\alpha := \Gamma(\alpha+1)\frac{\Gamma(1-2\alpha)^{1/2}}{\Gamma(1-\alpha)}
\]
is a finite positive constant.
\end{theorem}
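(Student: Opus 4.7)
The plan is to prove $\|\sigma_n^\alpha\|^2 = \|T_{h_n^\alpha}\|_{\ell^2 \to \ell^2}^2 \sim C_\alpha^2 n^{1-2\alpha}$ by establishing matching upper and lower bounds, both pivoting on the same sharp sum estimate.

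First I would reformulate the action of $T_c$ in a form that exposes the correct scaling. A summation by parts gives $(T_c v)_i = \sum_{j=i}^n c_j (v_j - v_{j+1})$ for $i \le n$. Passing to the reversed coordinates $\tilde v_k := v_{n+1-k}$ and $b_k := c_{n+1-k}$ (so $b_0=0$, $b_{n+1}=1$), this becomes
\[
(T_c v)_{n+1-m} = b_m \tilde v_m - \sum_{k=0}^{m-1} \delta_k \tilde v_k, \qquad \delta_k := b_{k+1}-b_k.
\]
The Gamma-function identity $\binom{m+1+\alpha}{\alpha} - \binom{m+\alpha}{\alpha} = \alpha \binom{m+\alpha}{\alpha}/(m+1)$ then delivers the clean formula $\delta_k = \alpha b_k/k$ for $k \geq 1$, while $\delta_0 = c_n \sim \Gamma(\alpha+1)\,n^{-\alpha}$.

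For the lower bound I would test against $\tilde v_k = \delta_k$ truncated to $0 \le k \le K$, where $K = K(n) \to \infty$ grows slowly with $n$. Since $\tilde v$ has finite support, for every $m > K$ the formula above collapses to the $m$-independent constant $L := -\sum_{k=0}^K \delta_k^2$, and hence
\[
\frac{\|T_c v\|^2}{\|\tilde v\|^2} \ge (n-K) \sum_{k=0}^{K} \delta_k^2 .
\]
The upper bound proceeds, for arbitrary $\tilde v$, by decomposing $(T_c v)_{n+1-m} = L_\infty + R_m$ with stabilized part $L_\infty := -\sum_{k=0}^n \delta_k \tilde v_k$ and transient remainder $R_m := b_m \tilde v_m + \sum_{k=m}^n \delta_k \tilde v_k$. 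Applying $(a+b)^2 \le (1+\varepsilon) a^2 + (1+\varepsilon^{-1}) b^2$ together with Cauchy--Schwarz on $L_\infty$ yields
\[
\|T_c v\|^2 \le (1+\varepsilon)\, n \Bigl(\sum_{k=0}^n \delta_k^2\Bigr) \|\tilde v\|^2 + (1+\varepsilon^{-1}) \sum_m R_m^2 .
\]
A weighted Cauchy--Schwarz with weight $k$ and interchange of summations gives $\sum_m R_m^2 \lesssim (\sum_k k\delta_k^2)\|\tilde v\|^2$, and since $\sum_k k\delta_k^2 = \alpha^2 \sum_k b_k^2/k$ is bounded uniformly in $n$ (the bulk contribution converges to $\alpha/2$), the remainder is $O(\|\tilde v\|^2)$, absorbed into a $(1+o(1))$ error by choosing $\varepsilon = \varepsilon_n \to 0$ slowly.

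The main obstacle, common to both bounds, is the sharp asymptotic $\sum_{k=0}^n \delta_k^2 \sim C_\alpha^2\, n^{-2\alpha}$. Substituting $\delta_k = \alpha b_k/k$ and the binomial asymptotic $b_k \sim n^{-\alpha}\,\Gamma(k+\alpha)/\Gamma(k)$ for bounded $k$ reduces the computation to $\sum_{l \ge 0} (\Gamma(l+\alpha)/\Gamma(l+1))^2$, which by Gauss's summation formula
\[
{}_2F_1(\alpha,\alpha;1;1) = \frac{\Gamma(1-2\alpha)}{\Gamma(1-\alpha)^2}
\]
sums to $\Gamma(\alpha)^2\, \Gamma(1-2\alpha)/\Gamma(1-\alpha)^2$ precisely because $\alpha < 1/2$. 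This hypergeometric identification produces the exact value of $C_\alpha$ in the statement and simultaneously explains the breakdown at $\alpha = 1/2$, where the series diverges and the different asymptotic behaviour of Theorem \ref{T:alpha=12} takes over.
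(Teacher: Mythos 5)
Your argument is correct, and its skeleton is the same as the paper's: sandwich $\|\sigma_n^\alpha\|^2$ between an upper bound of the form $n\sum_k\delta_k^2$ and a lower bound of the form $(n-K)\sum_{k\le K}\delta_k^2$, use the identity $c_{k+1}-c_k=-\tfrac{\alpha}{n-k}c_{k+1}$ (your $\delta_k=\alpha b_k/k$) to reduce everything to the series $\sum_k\bigl(\Gamma(k+\alpha)/\Gamma(k+1)\bigr)^2$, and identify its value as $\Gamma(\alpha)^2\Gamma(1-2\alpha)/\Gamma(1-\alpha)^2$. Where you differ is in presentation and in two technical choices. First, the paper simply cites the two norm inequalities \eqref{MasRanThm12-ii} and \eqref{MPRThm22-ii} from earlier work, whereas you re-derive both from scratch out of the summation-by-parts representation $(T_cv)_{n+1-m}=b_m\tilde v_m-\sum_{k<m}\delta_k\tilde v_k$: your truncated test vector $\tilde v_k=\delta_k$ reproves the lower bound, and your $L_\infty+R_m$ decomposition with the $\varepsilon$-splitting and weighted Cauchy--Schwarz (using $\sum_kk\delta_k^2=O(1)$) reproves the upper bound; this makes the proof self-contained at the cost of length, and it is worth noting that the remainder estimate genuinely needs $\alpha>0$ and $n^{1-2\alpha}\to\infty$ to absorb the $O(1+\varepsilon^{-1})$ term, both of which hold here. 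Second, you evaluate the series by Gauss's summation ${}_2F_1(\alpha,\alpha;1;1)=\Gamma(1-2\alpha)/\Gamma(1-\alpha)^2$, while the paper recognizes it as the $H^2$-norm of $(1-z)^{-\alpha}$ and invokes Parseval together with the Morris/Selberg integral; the two routes are equivalent and give the same constant $C_\alpha$, and both correctly locate the divergence at $\alpha=\tfrac12$ that separates this regime from Theorem~\ref{T:alpha=12}. The only loose thread is your phrase ``for bounded $k$'' in the asymptotic $b_k\sim n^{-\alpha}\Gamma(k+\alpha)/\Gamma(k)$: since the cutoff $K=K(n)\to\infty$, you need this uniformly for $k\le K$, which Gautschi-type two-sided bounds (as used in the paper) readily supply, so the gap is cosmetic rather than substantive.
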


\begin{theorem} \label{T:alpha=12}
Let $\alpha = \frac{1}{2}$. Then
\[
\|\sigma_n^\frac{1}{2}\| \sim \frac{1}{2}\log^{1/2} n.
\]
\end{theorem}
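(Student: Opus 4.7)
The plan is to estimate $\|\sigma_n^{1/2}\|=\|T_{h_n^{1/2}}\|_{\ell^2\to\ell^2}$ directly by splitting the matrix into its diagonal and strictly upper-triangular parts. Write $T_c=D_c+R$ with $D_c:=\operatorname{diag}(c_1,c_2,\ldots)$ and $(R)_{ij}=c_j-c_{j-1}$ for $j>i$, where $c_k=c_{k,n}^{1/2}$. For $\alpha=1/2$ a direct computation gives $c_1=n/(n+\tfrac12)\to 1$, so $\|D_c\|=c_1$ is bounded and contributes only a lower-order $o(\log^{1/2}n)$ term. The whole problem therefore reduces to showing $\|R\|\sim\tfrac12\log^{1/2}n$.

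For the upper bound I would work in duality. For unit vectors $x,y\in\ell^2$, regrouping the double sum by column index gives
\[
\langle Rx,y\rangle\;=\;\sum_{j\geq 1}(c_j-c_{j-1})\,x_j\,\overline{Y_j},\qquad Y_j:=\sum_{i<j}y_i.
\]
One application of Cauchy--Schwarz separating $x$ from the rest, followed by the elementary bound $|Y_j|^2\leq(j-1)\sum_{i<j}|y_i|^2$ and an interchange of summations, yields
\[
|\langle Rx,y\rangle|^2\;\leq\;\|x\|^2\|y\|^2\,S_n,\qquad S_n:=\sum_{j\geq 1}(c_j-c_{j-1})^2(j-1).
\]
Hence $\|R\|^2\leq S_n$, and the entire upper bound is reduced to the single scalar asymptotic $S_n=\tfrac14\log n\,(1+o(1))$. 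This is where I expect the estimation lemmas of Section~\ref{S:estimation-Lemma} to enter: writing $c_k=\phi(n-k)/\phi(n)$ with $\phi(m)=\Gamma(m+\tfrac32)/\Gamma(m+1)\sim\sqrt m$ gives, in the bulk, $(c_j-c_{j-1})^2\sim 1/(4n(n-j+1))$; substituting $m=n-j+1$ turns the sum into $\tfrac1{4n}\sum_{m=1}^{n-1}(n-m)/m\sim\tfrac14\log n$, while the terminal column $j=n+1$ contributes only $nc_n^2\sim\pi/4$.

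For the matching lower bound I would use the test vector $x_j:=1/\sqrt{n-j+1}$ for $1\leq j\leq n$ (and $0$ otherwise), so that $\|x\|^2=H_n\sim\log n$. Substituting into $(T_cx)_i=c_ix_i+\sum_{j>i}(c_j-c_{j-1})x_j$ and using the same asymptotic expressions gives $c_ix_i\sim 1/\sqrt n$ while the off-diagonal sum evaluates to $-\log(n-i)/(2\sqrt n)$ after the substitution $m=n-j+1$; together $(T_cx)_i\sim-\log(n-i)/(2\sqrt n)$ for $i\ll n$. Squaring, summing, and using $\sum_{m=1}^{n}\log^2 m\sim n\log^2 n$ gives $\|T_cx\|^2\sim\tfrac14\log^2 n$, and therefore $\|T_c\|^2\geq\tfrac14\log n\,(1+o(1))$, matching the upper bound.

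The main obstacle I anticipate is the precise asymptotic control of $c_k$ and $c_k-c_{k-1}$ when $k$ is close to $n$, where the bulk approximation $c_k\sim\sqrt{(n-k)/n}$ degenerates. I expect this boundary region to contribute only $O(1)$ both to $S_n$ and to $\|T_cx\|^2$, so that the leading coefficient $\tfrac14\log n$ survives on both sides; however, a rigorous separation of the bulk from the boundary, together with Stirling expansions uniform in $k$, requires some care and is almost certainly the calculatory content packaged as the estimation lemmas in Section~\ref{S:estimation-Lemma}.
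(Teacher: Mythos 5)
Your argument is correct in outline, but it takes a genuinely different route from the paper. The paper does not touch the matrix directly: it imports two operator-norm inequalities from the literature, namely the upper bound \eqref{MasRanThm12-ii}, $\|T_h\|^2\leq (n+1)\sum_{k=1}^n|c_{k+1}-c_k|^2$, and the lower bound \eqref{MPRThm22-ii}, $\|T_h\|^2\geq m\sum_{k=m}^n|c_{k+1}-c_k|^2$, and then reduces everything to the scalar estimates \eqref{E:estS=0.5} and \eqref{E:Sm=0.5} of Section \ref{S:estimation-Lemma} (with the choice $m=[(n-1)/2^\gamma]$ and $\gamma\to 0^+$ to recover the sharp constant $\tfrac14$). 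What you do instead is essentially re-prove both inequalities from scratch: your duality/Cauchy--Schwarz argument on the strictly upper-triangular part yields $\|R\|^2\leq\sum_j(j-1)(c_j-c_{j-1})^2$, which is a column-weighted refinement of \eqref{MasRanThm12-ii} (here both give the same leading term $\tfrac14\log n$, since the dominant contribution comes from $j$ near $n$ where $j-1\approx n$); and your explicit test vector $x_j=(n-j+1)^{-1/2}$ replaces the black-box lower bound \eqref{MPRThm22-ii}, whose proof in the cited reference is itself a test-vector argument. Your approach is more self-contained and arguably more transparent about where the logarithm comes from, at the cost of having to control the boundary region $j\approx n$ by hand; the paper's approach outsources the operator theory entirely and confines the analysis to one-dimensional sums handled uniformly by Gautschi's inequality \eqref{E:gautschi2}, which is why it extends with no extra work to the cases $\alpha<\tfrac12$ and $\alpha>\tfrac12$. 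The scalar asymptotics you assert (the identity $c_{k+1}-c_k=-\tfrac{1}{2(n-k)}c_{k+1}$, the approximation $c_k^2\approx(n-k)/n$, the value $nc_n^2\to\pi/4$, and the resulting $S_n\sim\tfrac14\log n$) all check out against the paper's computations, so the remaining work is indeed only the routine uniform Gamma-function estimates you flag.
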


\begin{theorem} \label{T:alpha>12}
Let $\frac{1}{2} < \alpha < 1$. Then
\[
\max\left\{1,\frac{\alpha}{(2\alpha-1)^{1/2}}\frac{(2\alpha-1)^{\alpha-1/2}}{(2\alpha)^{\alpha}} \right\}
\leq
\liminf_{n \to \infty} \|\sigma_n^\alpha\|
\]
and 
\[
\limsup_{n \to \infty} \|\sigma_n^\alpha\|
\leq
\frac{\alpha}{(2\alpha-1)^{1/2}}.
\]
\end{theorem}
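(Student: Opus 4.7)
The starting point is $\|\sigma_n^\alpha\| = \|T_{h_n^\alpha}\|_{\ell^2\to\ell^2}$ with coefficients $c_k = c_{k,n}^\alpha$ as in \eqref{E:defck}. Using the Richter--Sundberg representation $f(z) = f(1) + (z-1)g(z)$, with $g = \sum_{j\geq 0} b_j z^j \in H^2$ and $\mathcal{D}_1(f) = \|g\|_{H^2}^2$, an Abel summation converts $\mathcal{D}_1(\sigma_n^\alpha f)$ into $\sum_{j\geq 0} |g_j^\sigma|^2$ with
\[
g_j^\sigma = c_{j+1} b_j + \sum_{k > j}(c_{k+1} - c_k) b_k.
\]
Thus all three bounds in the theorem reduce to operator-norm estimates of the linear map $(b_j) \mapsto (g_j^\sigma)$ on $\ell^2$. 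The trivial lower bound $\liminf \|\sigma_n^\alpha\| \geq 1$ is immediate from Lemma~\ref{P:eigenvalues} applied with $k = 1$: a direct Gamma-function computation gives $c_{1,n}^\alpha = n/(n+\alpha) \to 1$.

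For the sharper lower bound I would construct an explicit test sequence $(b_j)$; the appearance of $(2\alpha-1)^{2\alpha-1}/(2\alpha)^{2\alpha}$ in the square of the stated constant is the maximum of $t(1-t)^{2\alpha-1}$ at $t = 1/(2\alpha)$, which strongly suggests using an indicator-type sequence $b_j \approx \mathbf{1}_{[k_0,\, n-1]}(j)$ with $k_0$ chosen near $n/(2\alpha)$, possibly modulated to produce the extra factor $\alpha/\sqrt{2\alpha-1}$. Verifying that the ratio $\|(g_j^\sigma)\|^2/\|(b_j)\|^2$ asymptotically matches the claimed value then relies on the precise $c_k \sim (1-k/n)^\alpha$ expansions from Section~\ref{S:estimation-Lemma}. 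For the upper bound I would apply a Cauchy--Schwarz estimate to $g_j^\sigma$ with weights tailored to the decay of the $c_k$; the constant $\alpha^2/(2\alpha-1)$ is expected to emerge from the integral $\alpha^2 \int_0^1 (1-s)^{2\alpha-2}\,ds = \alpha^2/(2\alpha-1)$, i.e., from $\int_0^1 |c'(s)|^2\,ds$ with $c(s) = (1-s)^\alpha$.

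The principal obstacle is the upper bound. A naive Schur test with uniform or simple power weights produces only a constant strictly larger than $\alpha^2/(2\alpha-1)$, so the delicate step is to identify weights balancing exactly the contributions of the diagonal entries $c_{j+1}$ and the off-diagonal differences $c_{k+1}-c_k$. Combined with uniform control over the discrete-to-continuous error terms provided by the estimation results of Section~\ref{S:estimation-Lemma}, such a choice should deliver the asserted asymptotic constant.
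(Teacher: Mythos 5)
Your plan correctly disposes of the easiest piece: the bound $\liminf_n\|\sigma_n^\alpha\|\geq 1$ via Lemma~\ref{P:eigenvalues} with $k=1$ and $c_1=n/(n+\alpha)$ is exactly what the paper does. For the sharper lower bound your heuristic is also on target --- the constant is indeed governed by the maximum of $t(1-t)^{2\alpha-1}$ at $t=1/(2\alpha)$, and the paper realizes precisely this by taking $m=\left[\frac{n-1}{2\alpha}\right]$ in the already-stated inequality \eqref{MPRThm22-ii}, $\|T_h\|^2\geq m\tilde S_m$, combined with the lower estimate \eqref{E:Smneq0.5} for $\tilde S_m$. You propose instead to build an indicator-type test sequence from scratch; that is morally the same mechanism (it is how \eqref{MPRThm22-ii} is obtained in the cited source), but you never carry out the verification, so this part remains a sketch rather than a proof.

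The genuine gap is the upper bound. You propose a Cauchy--Schwarz/Schur test on the map $(b_j)\mapsto(g_j^\sigma)$ and then concede that with any weights you can exhibit the method gives a constant \emph{strictly larger} than $\alpha^2/(2\alpha-1)$, deferring the ``delicate step'' of finding the right weights to future work. That step is the entire content of the upper bound, and it is not resolved in your proposal. The paper avoids the issue altogether: it invokes the inequality \eqref{MasRanThm12-ii}, $\|T_h\|^2\leq(n+1)\sum_{k=1}^n|c_{k+1}-c_k|^2$ (quoted in Section~\ref{S:estimation-Lemma} from \cite{MR4032210}), together with Lemma~\ref{L:estS}. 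Since $c_n^2\sim\Gamma(\alpha+1)^2 n^{-2\alpha}$ by \eqref{E:asympcn} and $\sum_{k=1}^{n-1}\Gamma(k+\alpha)^2/\Gamma(k+1)^2\sim n^{2\alpha-1}/(2\alpha-1)$ by Gautschi's inequality, the right-hand side of \eqref{MasRanThm12-ii} tends to $\Gamma(\alpha+1)^2/\big(\Gamma(\alpha)^2(2\alpha-1)\big)=\alpha^2/(2\alpha-1)$, which is the asserted $\limsup$ bound. Without \eqref{MasRanThm12-ii} (or an equivalent sharp substitute, which you do not supply), your argument does not reach the stated constant.
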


\section{Technical Lemmas} \label{S:estimation-Lemma}
In \cite{MR4032210}, it was shown that for a polynomial $h$ of degree $n$,
\begin{equation}\label{MasRanThm12-ii}
\left\|T_h\right\|^{2} \leq (n+1) \sum_{k=1}^{n}\left|c_{k+1}-c_{k}\right|^{2}.
\end{equation}
Additionally, this estimate was complemented in \cite{MR4179962} by the lower bounds
\begin{equation} \label{MPRThm22-ii}
\|T_h\|^2 \geq m \sum_{k=m}^{n}\left|c_{k+1}-c_{k}\right|^{2},
\end{equation}
which hold for every $m \in \{1,\ldots,n\}$. Using these results, we proceed to derive explicit asymptotic expressions for generalized Ces\`{a}ro means.
We begin with the following pair of inequalities, originally due to W. Gautschi \cite{gautschi1959}. For the reader's convenience, a sketch of the proof is provided.

\begin{lemma}[Gautschi's Inequality]
Let $x$ be a positive real number and $\alpha\in(0,1).$ Then
\begin{equation} \label{E:gautschi2}
(x+1)^{\alpha-1} < \frac{\Gamma(x+\alpha)}{\Gamma(x+1)} < x^{\alpha-1}.
\end{equation}
\end{lemma}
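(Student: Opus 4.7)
The plan is to derive both inequalities in \eqref{E:gautschi2} from the strict log-convexity of the Gamma function, a classical fact that follows, for instance, from the Weierstrass product formula or the Bohr--Mollerup theorem. Fix $x > 0$ and set $\varphi(t) := \log\Gamma(x+t)$, which is strictly convex on $(-x, \infty)$. The three-slope lemma then asserts that for any $t_1 < t_2 < t_3$ in its domain,
\[
\frac{\varphi(t_2)-\varphi(t_1)}{t_2-t_1} < \frac{\varphi(t_3)-\varphi(t_2)}{t_3-t_2}.
\]

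For the upper bound, I would apply this inequality with $(t_1, t_2, t_3) = (0, \alpha, 1)$: after using the functional equation $\Gamma(x+1) = x\Gamma(x)$ to eliminate $\log\Gamma(x)$, the expression rearranges directly to $\log\Gamma(x+\alpha) - \log\Gamma(x+1) < (\alpha-1)\log x$, i.e.\ $\Gamma(x+\alpha)/\Gamma(x+1) < x^{\alpha-1}$. For the lower bound, the effective choice is $(t_1, t_2, t_3) = (\alpha, 1, 2)$: the right-most slope collapses to $\log\Gamma(x+2) - \log\Gamma(x+1) = \log(x+1)$ via $\Gamma(x+2) = (x+1)\Gamma(x+1)$, and rearrangement yields $\Gamma(x+\alpha)/\Gamma(x+1) > (x+1)^{\alpha-1}$.

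There is no substantive obstacle here; the only choice with any content is the triple of abscissas, guided by the heuristic that one wants the outer slope to simplify to a closed-form logarithm through the functional equation $\Gamma(t+1) = t\Gamma(t)$. Strictness of both inequalities is automatic from the strict convexity of $\varphi$ when $\alpha \in (0,1)$.
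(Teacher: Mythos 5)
Your proof is correct and rests on the same key fact as the paper's: strict log-convexity of $\Gamma$ combined with the functional equation $\Gamma(t+1)=t\Gamma(t)$ (your three-slope lemma with abscissas $(0,\alpha,1)$ is exactly the paper's convex-combination inequality for the upper bound). The only cosmetic difference is in the lower bound, where the paper interpolates at $(\alpha,1,\alpha+1)$ and obtains the slightly sharper constant $(x+\alpha)^{\alpha-1}$ before relaxing it to $(x+1)^{\alpha-1}$, whereas your choice $(\alpha,1,2)$ lands on $(x+1)^{\alpha-1}$ directly.
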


\begin{proof}
Write $\Gamma(x+\alpha) = \Gamma\big((1-\alpha)x+\alpha(x+1)\big)$. Then, by the strict log-convexity of the Gamma function on the positive real axis \cite{artin1964}, we have
\[
\Gamma(x+\alpha) < \Gamma(x)^{1-\alpha}\Gamma(x+1)^\alpha
\]
for $x>0$ and $\alpha\in (0,1)$. Hence, by the central multiplication formula for the Gamma function, i.e., $\Gamma(x+1) = x\Gamma(x)$, $x>0$, we deduce that
\[
\Gamma(x+\alpha) < x^{\alpha-1}\Gamma(x+1),
\]
proving the second inequality in \eqref{E:gautschi2}.

For the first inequality, using similar arguments, we see that
\begin{align*}
\Gamma(x+1) &= \Gamma\big( \alpha(x+\alpha)+(1-\alpha)(x+\alpha+1)\big) \\
&<\Gamma(x+\alpha)^\alpha\Gamma(x+\alpha+1)^{1-\alpha} \\
&= \Gamma(x+\alpha) (x+\alpha)^{1-\alpha},
\end{align*}
concluding the proof.
\end{proof}

Notice that in the above proof, we actually obtained the lower bound $(x+\alpha)^{\alpha-1}$. However, since we are only concerned about the asymptotic behaviour as $x\to\infty$, we may equally use the lower quantity $(x+1)^{\alpha-1}$. We now present an optimal upper estimate for the quantity appearing in \eqref{MasRanThm12-ii}.

\begin{lemma} \label{L:estS}
Let $\alpha\in (0,1)$, $n>1$, and let $c_k$ be as in \eqref{E:defck}. Let
\[
S:= \sum_{k=1}^n |c_{k+1}-c_k|^2.
\]
Then, for $\alpha\neq \frac{1}{2}$,
\begin{equation} \label{E:estSneq0.5}
S \leq \Gamma(\alpha+1)^2 \frac{(n+1)^{2-2\alpha}}{(n+\alpha)^2 }\left(1+ \frac{(n-1)^{2\alpha-1}}{\Gamma(\alpha)^2(2\alpha-1)} +\frac{2\alpha-2}{\Gamma(\alpha)^2(2\alpha-1)} \right),
\end{equation}
and, for $\alpha = \frac{1}{2}$,
\begin{equation} \label{E:estS=0.5}
S \leq \frac{\pi}{4} \frac{n+1}{(n+\frac{1}{2})^2}\left(1+\frac{1}{\pi}( \log(n-1) +1) \right).
\end{equation}
\end{lemma}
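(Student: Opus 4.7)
The plan is to compute $c_{k+1}-c_k$ in closed form using the Pascal-type identity $\binom{m+\alpha}{\alpha}-\binom{m-1+\alpha}{\alpha}=\binom{m-1+\alpha}{\alpha-1}$. Applied to $m = n-k$, this gives
\[
c_k - c_{k+1} = \binom{n+\alpha}{\alpha}^{-1}\binom{n-k-1+\alpha}{\alpha-1},
\]
so that, after expanding the binomials in terms of Gamma functions and using $\Gamma(\alpha+1)=\alpha\Gamma(\alpha)$,
\[
|c_{k+1}-c_k|^2 = \alpha^2 \left(\frac{\Gamma(n+1)}{\Gamma(n+\alpha+1)}\right)^{\!2}\left(\frac{\Gamma(n-k+\alpha)}{\Gamma(n-k+1)}\right)^{\!2}, \qquad 1\leq k\leq n-1.
\]
The $k=n$ term is special: since $c_{n+1}=0$, it equals $c_n^2 = \binom{n+\alpha}{\alpha}^{-2}$. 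I would treat it separately; after using Gautschi to bound $\Gamma(n+1)/\Gamma(n+\alpha+1) < (n+1)^{1-\alpha}/(n+\alpha)$, this term contributes exactly the ``$1$'' inside the parenthesis of the claimed bound.

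For the remaining terms, I would apply Gautschi's inequality \eqref{E:gautschi2} in both directions: the upper bound $\Gamma(n-k+\alpha)/\Gamma(n-k+1) < (n-k)^{\alpha-1}$ and (after pulling $n+\alpha$ out of the first ratio) $\Gamma(n+1)/\Gamma(n+\alpha) < (n+1)^{1-\alpha}$. Together these yield
\[
\sum_{k=1}^{n-1}|c_{k+1}-c_k|^2 < \alpha^2 \frac{(n+1)^{2-2\alpha}}{(n+\alpha)^2}\sum_{j=1}^{n-1} j^{\,2\alpha-2},
\]
after reindexing by $j=n-k$.

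All that is left is a standard integral comparison for the tail sum. Since $j\mapsto j^{2\alpha-2}$ is decreasing on $(0,\infty)$ for $\alpha\in(0,1)$,
\[
\sum_{j=1}^{n-1} j^{\,2\alpha-2} \leq 1 + \int_{1}^{n-1} x^{2\alpha-2}\,dx,
\]
which for $\alpha\neq 1/2$ evaluates to $1 + \frac{(n-1)^{2\alpha-1}-1}{2\alpha-1} = \frac{(n-1)^{2\alpha-1}+2\alpha-2}{2\alpha-1}$ and for $\alpha=1/2$ becomes $1+\log(n-1)$. Using $\alpha^2 = \Gamma(\alpha+1)^2/\Gamma(\alpha)^2$ and adding back the $k=n$ contribution then produces \eqref{E:estSneq0.5}; for $\alpha=1/2$, combining the two pieces with $\Gamma(3/2)^2=\pi/4$ gives \eqref{E:estS=0.5}.

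I do not anticipate a major obstacle: the only delicate point is keeping track of signs when $2\alpha-1$ changes sign at $\alpha=1/2$, and confirming that the bound $(n-1)^{2\alpha-1}-1$ divided by $2\alpha-1$ is positive in both regimes; this is immediate from monotonicity. The splitting off of the boundary term $k=n$ is what provides the isolated ``$1$'' in the parenthesis, so it is worth emphasizing in the write-up rather than folding into the general sum.
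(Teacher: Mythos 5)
Your proposal is correct and follows essentially the same route as the paper: both isolate the $k=n$ term $c_n^2$ (which becomes the ``$1$'' in the parenthesis), reduce $|c_{k+1}-c_k|$ for $k<n$ to $\alpha\,\Gamma(n+1)\Gamma(n-k+\alpha)/\bigl(\Gamma(n+\alpha+1)\Gamma(n-k+1)\bigr)$ — you via the Pascal-type identity, the paper via the ratio $c_{k+1}-c_k=-\frac{\alpha}{n-k}c_{k+1}$, which is the same quantity — and then apply Gautschi's inequality and the integral comparison $\sum_{j=1}^{n-1}j^{2\alpha-2}\leq 1+\int_1^{n-1}x^{2\alpha-2}\,dx$ identically. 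The bookkeeping with $\alpha^2=\Gamma(\alpha+1)^2/\Gamma(\alpha)^2$ and the two cases $\alpha\neq\frac12$, $\alpha=\frac12$ matches the paper's computation exactly.
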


\begin{proof}
Since $c_{n+1}=0$ and all the $c_k$'s are real, then
\[S = c_n^2 + \sum_{k=1}^{n-1} (c_{k+1}-c_k)^2.\]
For $k=1,\ldots,n-1$,
\begin{align*}
c_{k+1}-c_k &= \binom{n+\alpha}{\alpha}^{-1}\left(\binom{n-k-1+\alpha}{\alpha} - \binom{n-k+\alpha}{\alpha}\right) \\
&= \binom{n+\alpha}{\alpha}^{-1}\frac{1}{\Gamma(\alpha+1)}\left(\frac{\Gamma(n-k+\alpha)}{\Gamma(n-k)} - \frac{\Gamma(n-k+\alpha+1)}{\Gamma(n-k+1)}\right) \\
&= c_{k+1} \left(1 - \frac{n-k+\alpha}{n-k}\right) \\
&= -\frac{\alpha}{n-k} c_{k+1}.
\end{align*}
Hence,
\begin{align*}
S &= c_n^2\left( 1 + \frac{\alpha^2}{\Gamma(\alpha+1)^2}\sum_{k=1}^{n-1}\frac{1}{(n-k)^2} \frac{\Gamma(n-k+\alpha)^2}{\Gamma(n-k)^2}\right) \\
&= c_n^2\left( 1 + \frac{1}{\Gamma(\alpha)^2}\sum_{k=1}^{n-1} \frac{\Gamma(k+\alpha)^2}{\Gamma(k+1)^2}\right).
\end{align*}

For the leading factor $c_n$, we have
\begin{equation} \label{E:uppercn}
c_n^2 = \binom{n+\alpha}{\alpha}^{-2} =\frac{\Gamma(\alpha+1)^2 \Gamma(n+1)^2}{\Gamma(n+\alpha+1)^2} < \Gamma(\alpha+1)^2 \frac{(n+1)^{2-2\alpha}}{(n+\alpha)^2 },
\end{equation}
where we used Gautschi's inequality \eqref{E:gautschi2}. On the other hand, once more by \eqref{E:gautschi2},
\[
\sum_{k=1}^{n-1} \frac{\Gamma(k+\alpha)^2}{\Gamma(k+1)^2} < \sum_{k=1}^{n-1} k^{2\alpha-2}.
\]
Thus, estimating the sum with the corresponding integral, we see that
\[
\sum_{k=1}^{n-1} \frac{\Gamma(k+\alpha)^2}{\Gamma(k+1)^2} < 1 + \sum_{k=2}^{n-1} k^{2\alpha-2} \leq 1+ \int_1^{n-1} x^{2\alpha-2} \,dx.
\]

At this point, we have to distinguish between two cases. If $\alpha \neq \frac{1}{2}$, then
\[
\sum_{k=1}^{n-1} \frac{\Gamma(k+\alpha)^2}{\Gamma(k+1)^2} < \frac{(n-1)^{2\alpha-1}}{2\alpha-1} + \frac{2\alpha-2}{2\alpha-1},
\]
and therefore \eqref{E:estSneq0.5} is proved. But, if $\alpha=\frac{1}{2}$, we have
\[
\sum_{k=1}^{n-1} \frac{\Gamma(k+\alpha)^2}{\Gamma(k+1)^2} < \log(n-1) +1,
\]
which gives \eqref{E:estS=0.5}. In this case, we also need the well-known identities
\[
\Gamma\left(\frac{1}{2}\right)=\sqrt{\pi}, \qquad \Gamma\left(\frac{3}{2}\right)=\frac{\sqrt{\pi}}{2}.
\]
\end{proof}

At the same token, we provide a lower estimate for the quantity in \eqref{MPRThm22-ii}.

\begin{lemma}
Let $\alpha\in (0,1)$, $n>1$, $c_k$ be as in \eqref{E:defck}, and $m$ be a natural number with $1\leq m<n$. Let
\[
\tilde{S}_m := \sum_{k=m}^n |c_{k+1}-c_k|^2.
\]
Then, for $\alpha \neq \frac{1}{2}$,
\begin{equation} \label{E:Smneq0.5}
\tilde{S}_m > \Gamma(\alpha+1)^2 \frac{n^{2-2\alpha}}{(n+\alpha)^2 }\left(1+ \frac{(n-m+2)^{2\alpha-1}}{\Gamma(\alpha)^2(2\alpha-1)} - \frac{2^{2\alpha-1}}{\Gamma(\alpha)^2(2\alpha-1)}\right),
\end{equation}
and, for $\alpha = \frac{1}{2}$,
\begin{equation} \label{E:Sm=0.5}
\tilde{S}_m > \frac{\pi}{4} \frac{n}{(n+\frac{1}{2})^2}\left(1+\frac{1}{\pi}\big(\log(n-m+2)-\log2\big) \right).
\end{equation}
\end{lemma}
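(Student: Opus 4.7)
The strategy is to mirror, step by step, the upper-bound argument in Lemma \ref{L:estS}, reversing the direction of Gautschi's inequality and of the sum-integral comparison at every stage.

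Starting from the algebraic identity $c_{k+1}-c_k = -\frac{\alpha}{n-k}c_{k+1}$ already derived in the proof of Lemma \ref{L:estS}, together with $c_{n+1}=0$, one obtains via the same index shift $j=n-k$ the exact equality
\[
\tilde{S}_m = c_n^2\left(1+\frac{1}{\Gamma(\alpha)^2}\sum_{j=1}^{n-m}\frac{\Gamma(j+\alpha)^2}{\Gamma(j+1)^2}\right).
\]
It therefore suffices to bound each of the two factors from below.

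For the prefactor, writing $\Gamma(n+\alpha+1)=(n+\alpha)\Gamma(n+\alpha)$ and applying the \emph{upper} half of Gautschi's inequality \eqref{E:gautschi2} to $\Gamma(n+\alpha)/\Gamma(n+1)$ yields $\Gamma(n+\alpha+1)/\Gamma(n+1) < (n+\alpha)n^{\alpha-1}$, and hence
\[
c_n^2 > \Gamma(\alpha+1)^2\,\frac{n^{2-2\alpha}}{(n+\alpha)^2},
\]
which is exactly the leading constant appearing in both \eqref{E:Smneq0.5} and \eqref{E:Sm=0.5}. For the inner sum, the \emph{lower} half of Gautschi's inequality gives $\Gamma(j+\alpha)/\Gamma(j+1) > (j+1)^{\alpha-1}$, and since $x\mapsto (x+1)^{2\alpha-2}$ is positive and decreasing for $\alpha<1$, the standard sum-above-integral comparison for decreasing functions, followed by the substitution $y=x+1$, produces
\[
\sum_{j=1}^{n-m}(j+1)^{2\alpha-2} > \int_{2}^{n-m+2} y^{2\alpha-2}\,dy.
\]
Evaluating the integral furnishes the case split: for $\alpha\neq 1/2$ one gets $((n-m+2)^{2\alpha-1}-2^{2\alpha-1})/(2\alpha-1)$, and multiplying by the prefactor delivers \eqref{E:Smneq0.5}; for $\alpha=1/2$ one gets $\log(n-m+2)-\log 2$, and substituting $\Gamma(3/2)^2=\pi/4$ yields \eqref{E:Sm=0.5}.

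I do not foresee any serious obstacle, since the whole proof is a mechanical inversion of Lemma \ref{L:estS}. The one detail that deserves a sanity check is the sign of the quotient $((n-m+2)^{2\alpha-1}-2^{2\alpha-1})/(2\alpha-1)$ when $\alpha<1/2$: both numerator and denominator are then negative, so their ratio is positive and the stated lower bound is genuinely nontrivial. The case $\alpha>1/2$ is immediate since both numerator and denominator are positive.
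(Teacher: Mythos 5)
Your proposal is correct and follows essentially the same route as the paper: the same exact identity $\tilde{S}_m = c_n^2\bigl(1+\Gamma(\alpha)^{-2}\sum_{j=1}^{n-m}\Gamma(j+\alpha)^2/\Gamma(j+1)^2\bigr)$, the same Gautschi-based lower bound $c_n^2>\Gamma(\alpha+1)^2 n^{2-2\alpha}/(n+\alpha)^2$, and the same sum-above-integral comparison $\sum_{j=1}^{n-m}(j+1)^{2\alpha-2}\geq\int_1^{n-m+1}(x+1)^{2\alpha-2}\,dx$, which is your integral after the substitution $y=x+1$. The sign check for $\alpha<\tfrac12$ is a welcome addition but the argument is otherwise identical to the paper's.
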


\begin{proof}
This proof has the same flavor as the previous one, but we are using the reverse inequalities in each case.
For $m<n$,
\begin{align*}
\tilde{S}_m &= c_n^2\left( 1 + \frac{1}{\Gamma(\alpha)^2}\sum_{k=m}^{n-1}\frac{\Gamma(n-k+\alpha)^2}{\Gamma(n-k+1)^2}\right)  \\
&= c_n^2\left( 1 + \frac{1}{\Gamma(\alpha)^2}\sum_{k=1}^{n-m}\frac{\Gamma(k+\alpha)^2}{\Gamma(k+1)^2}\right) .
\end{align*}
For the coefficient $c_n$, we have
\begin{equation} \label{E:lowercn}
c_n^2 =\frac{\Gamma(\alpha+1)^2 \Gamma(n+1)^2}{\Gamma(n+\alpha+1)^2} > \Gamma(\alpha+1)^2 \frac{n^{2-2\alpha}}{(n+\alpha)^2}.
\end{equation}
Note that in \eqref{MPRThm22-ii} we were allowed to pick $m=n$. However, in this case $\tilde{S}_n = c_n^2$, and the required estimate is precisely the established inequality \eqref{E:lowercn}. Then
\[
\sum_{k=1}^{n-m} \frac{\Gamma(k+\alpha)^2}{\Gamma(k+1)^2} > \sum_{k=1}^{n-m} (k+1)^{2\alpha-2} \geq \int_1^{n-m+1} (x+1)^{2\alpha-2} \,dx,
\]
and we conclude the proof by evaluating the integrals in the cases $\alpha \neq \frac{1}{2}$ and $\alpha = \frac{1}{2}$.
\end{proof}

Note that comparing \eqref{E:uppercn} and \eqref{E:lowercn}, we obtain the asymptotic
\begin{equation} \label{E:asympcn}
c_n^2 \sim \frac{\Gamma(\alpha+1)^2}{n^{2\alpha}}.
\end{equation}

\section{Proofs of the main results} \label{S:proofs}

\subsection{Proof of Theorem \ref{T:alpha<12}}
By Gautschi's inequality \eqref{E:gautschi2},
\begin{equation}\label{E:stirling-2}
\frac{\Gamma(k+\alpha)^2}{\Gamma(k+1)^2} < k^{2(\alpha-1)}, \qquad k\geq 1.
\end{equation}
Since in this case $0<\alpha < \frac{1}{2}$, we conclude that
\[
\sum_{k=1}^{\infty} \frac{\Gamma(k+\alpha)^2}{\Gamma(k+1)^2} < \infty.
\]
In fact, we can go further and observe that
\[
\frac{1}{\Gamma(\alpha)^2}\sum_{k=0}^{\infty} \frac{\Gamma(k+\alpha)^2}{\Gamma(k+1)^2}=
\sum_{k=0}^\infty \binom{k+\alpha-1}{\alpha-1}^2 = \sum_{k=0}^\infty \binom{k+\alpha-1}{k}^2
\]
is the $H^2$-norm of the function $\sum_{k=0}^\infty \binom{k+\alpha-1}{k}z^k=(1-z)^{-\alpha}$ ($|z|<1$), so by Parseval's theorem we have 
\[
\frac{1}{\Gamma(\alpha)^2}\sum_{k=0}^{\infty} \frac{\Gamma(k+\alpha)^2}{\Gamma(k+1)^2} = \frac{1}{2\pi}\int_{-\pi}^\pi |1-e^{i\theta}|^{-2\alpha}\,d\theta.
\]
This can be rewritten as the double integral
\[
\frac{1}{2\pi}\int_{-\pi}^\pi |1-e^{i\theta}|^{-2\alpha}\,d\theta=\frac{1}{(2\pi)^2}\int_{-\pi}^\pi\int_{-\pi}^\pi |e^{i\theta_1}-e^{i\theta_2}|^{-2\alpha}\,d\theta_1d\theta_2,
\]
by rotational invariance of the Lebesgue measure. Hence, we obtain the Morris integral for $k=2$ (or a version of the Selberg integral \cite{MR18287, MR3287209, MR3308963}), which  equals 
\[
\frac{1}{(2\pi)^2}\int_{-\pi}^\pi\int_{-\pi}^\pi |e^{i\theta_1}-e^{i\theta_2}|^{-2\alpha}\,d\theta_1d\theta_2 = \frac{\Gamma(1-2\alpha)}{\Gamma(1-\alpha)^2}.
\]

Now, on the one hand, by \eqref{MasRanThm12-ii},
\[
\|\sigma_n^\alpha\|^2 \leq (n+1)\sum_{k=1}^n |c_{k+1}-c_k|^2 = (n+1)c_n^2\left( 1 + \frac{1}{\Gamma(\alpha)^2}\sum_{k=1}^{n-1} \frac{\Gamma(k+\alpha)^2}{\Gamma(k+1)^2}\right).
\]
Then, by the asymptotic \eqref{E:asympcn}, we have
\begin{eqnarray*}
\limsup_n \frac{\|\sigma_n^\alpha\|^2}{n^{1-2\alpha}} &\leq& \Gamma(\alpha+1)^2 \left(1+\frac{1}{\Gamma(\alpha)^2}\sum_{k=1}^{\infty} \frac{\Gamma(k+\alpha)^2}{\Gamma(k+1)^2}\right) \\
&=& \Gamma(\alpha+1)^2 \frac{\Gamma(1-2\alpha)}{\Gamma(1-\alpha)^2} .
\end{eqnarray*}

On the other hand, put
\[
m:=\left[\frac{n-1}{2^\gamma}\right],
\]
where $[\cdot]$ denotes the integer part and $\gamma\in(0,1)$. Then, by \eqref{MPRThm22-ii}, we obtain
\begin{align*}
\|\sigma_n^\alpha\|^2 \geq m \tilde{S}_m = m c_n^2\left( 1 + \frac{1}{\Gamma(\alpha)^2}\sum_{k=1}^{n-m}\frac{\Gamma(k+\alpha)^2}{\Gamma(k+1)^2}\right).
\end{align*}
Notice that $m \geq \frac{n-1}{2^\gamma}-1$ and
\[
n-m \geq n-\frac{n-1}{2^\gamma}=\frac{(2^\gamma-1)n+1}{2^\gamma},
\]
so that $n-m\to+\infty$ as $n\to+\infty$, and for every $\gamma\in(0,1)$,
\[
\liminf_n \frac{\|\sigma_n^\alpha\|^2}{n^{1-2\alpha}}\geq \frac{\Gamma(\alpha+1)^2}{2^\gamma}\frac{\Gamma(1-2\alpha)}{\Gamma(1-\alpha)^2}.
\]
Taking the limit as $\gamma\to 0^+$, we conclude the proof. \qed

\subsection{Proof of Theorem \ref{T:alpha=12}}
 By \eqref{MasRanThm12-ii} and \eqref{E:estS=0.5}, we have
\[
\frac{\|\sigma_n^\frac{1}{2}\|^2}{\log n}\leq \frac{\pi}{4} \frac{(n+1)^2}{(n+\frac{1}{2})^2}\left(\frac{1}{\log n}+\frac{\log(n-1)}{\pi \log n}+ \frac{1}{\pi\log n} \right),
\]
revealing that
\[
\limsup_{n} \frac{\|\sigma_n^\frac{1}{2}\|^2}{\log n}\leq \frac{1}{4}.
\]

Now, let $\gamma \in(0,1)$, and set
\[
m:=\left[\frac{n-1}{2^\gamma}\right].
\]
Then we have
\[
m\leq \frac{n-1}{2^\gamma} < n-1
\]
and that, at least for $n\geq 7$,
\[
m\geq \frac{n-1}{2^\gamma}-1 \geq \frac{n-1}{2}-1 = \frac{n-3}{4} \geq 1.
\]
Therefore, for every $n\geq 7$, equations \eqref{MPRThm22-ii} and \eqref{E:Sm=0.5} yield
\begin{align*}
\|\sigma_n^\frac{1}{2}\|^2 &> \frac{\pi}{4} \frac{mn}{(n+\frac{1}{2})^2}\left(1+\frac{1}{\pi}\big(\log(n-m+2)-\log2\big) \right) \\
&\geq \frac{\pi}{4} \left(\frac{n-1}{2^\gamma}-1\right)\frac{n}{(n+\frac{1}{2})^2}\left(1+\frac{1}{\pi}\left(\log\left(\frac{n(2^\gamma-1)+1+2^{\gamma+1}}{2^\gamma}\right)-\log2\right) \right).
\end{align*}
In particular, for every $\gamma \in (0,1)$,
\[
\liminf_n \frac{\|\sigma_n^\frac{1}{2}\|^2}{\log n}\geq \frac{1}{2^\gamma}\frac{1}{4},
\]
and the theorem follows by taking the limit as $\gamma\to 0^+$. \qed

\subsection{Proof of Theorem \ref{T:alpha>12}}

By \eqref{MasRanThm12-ii} and \eqref{E:estSneq0.5},
\begin{align*}
\|\sigma_n^\alpha\|^2 &\leq (n+1)\sum_{k=1}^n |c_{k+1}-c_k|^2\\
&\leq \Gamma(\alpha+1)^2 \frac{(n+1)^{3-2\alpha}}{(n+\alpha)^2 }\left(1+ \frac{(n-1)^{2\alpha-1}}{\Gamma(\alpha)^2(2\alpha-1)} +\frac{2\alpha-2}{\Gamma(\alpha)^2(2\alpha-1)} \right).
\end{align*}
In particular, since $\frac{1}{2} < \alpha <1$,
\[
\limsup_{n} \|\sigma_n^\alpha\|^2 \leq \frac{\alpha^2}{2\alpha-1}.
\]

For the other estimate, by Lemma \ref{P:eigenvalues},
\[
\|\sigma_n^\alpha\|^2 \geq |c_1|^2 = \frac{\Gamma(n+1)^2}{\Gamma(n+\alpha+1)^2}\frac{\Gamma(n+\alpha)^2}{\Gamma(n)^2}=\frac{n^2}{(n+\alpha)^2},
\]
and thus $\liminf_n\|\sigma_n^\alpha\|^2 \geq 1$. Moreover, by \eqref{MPRThm22-ii} and \eqref{E:Smneq0.5}, we have
\begin{align*}
\|\sigma_n^\alpha\|^2 \geq m\tilde{S}_m > m\Gamma(\alpha+1)^2 \frac{n^{2-2\alpha}}{(n+\alpha)^2 }\left(1+ \frac{(n-m+2)^{2\alpha-1}}{\Gamma(\alpha)^2(2\alpha-1)} - \frac{2^{2\alpha-1}}{\Gamma(\alpha)^2(2\alpha-1)}\right),
\end{align*}
for all $m\in\{1,\ldots,n\}$. Put $m=\left[\frac{n-1}{2\alpha}\right]$. This particular choice yields
\[
\|\sigma_n^\alpha\|^2 \geq \frac{n-1-2\alpha}{2\alpha}\Gamma(\alpha+1)^2 \frac{n^{2-2\alpha}}{(n+\alpha)^2 }\left(1+ \frac{\left(\frac{(2\alpha-1)n+3}{2\alpha}\right)^{2\alpha-1}}{\Gamma(\alpha)^2(2\alpha-1)} - \frac{2^{2\alpha-1}}{\Gamma(\alpha)^2(2\alpha-1)}\right),
\]
so that
\[
\liminf_n\|\sigma_n^\alpha\|^2 \geq \frac{1}{2\alpha}\Gamma(\alpha+1)^2\Big(\frac{2\alpha-1}{2\alpha}\Big)^{2\alpha-1}\frac{1}{\Gamma(\alpha)^2(2\alpha-1)} = \frac{\alpha^2(2\alpha-1)^{2\alpha-2}}{(2\alpha)^{2\alpha}}.
\]
\qed

\section{Final Remarks}
We conclude this work with the following comments.
\begin{enumerate}[(i)]
\item Theorems \ref{T:alpha<12}, \ref{T:alpha=12} and \ref{T:alpha>12} reaffirm that $\sigma_n^\alpha(f)\to f$ in $\mathcal{D}_\omega$ if and only if $\alpha > 1/2$, as it was established in \cite{MR4179962}. Additionally, the behavior at the critical case $\alpha=1/2$ was conjectured and observed numerically in the doctoral dissertation of P.-O. Paris\'{e}. See \cite{MR4179962}.

\item For $\alpha<\frac{1}{2}$, by Theorem \ref{T:alpha<12} we have that $\|\sigma_n^\alpha\|\sim C_\alpha n^{\frac{1}{2}-\alpha}$, with
\[C_\alpha = \Gamma(\alpha+1)\frac{\Gamma(1-2\alpha)^{1/2}}{\Gamma(1-\alpha)}.\]
For each fixed $n\geq1$, interpreting $ C_\alpha n^{\frac{1}{2}-\alpha}$ as a continuous function of $\alpha$, we have that
$$\lim_{\alpha \to 0^+} C_\alpha n^{\frac{1}{2}-\alpha} = n^{\frac{1}{2}},$$
which is coherent with the fact that the norm of $S_n=\sigma_n^0$ asymptotically behaves as $\sqrt{n}$.
Moreover, it was showed in the proof of Theorem \ref{T:alpha<12} that
\[C_\alpha=\Gamma(\alpha+1)\left(1+\frac{1}{\Gamma(\alpha)^2}\sum_{k=1}^{\infty} \frac{\Gamma(k+\alpha)^2}{\Gamma(k+1)^2}\right)^\frac{1}{2},\]
so that by Fatou's Lemma
\[
\liminf_{\alpha\to\frac{1}{2}^-} C_\alpha n^{\frac{1}{2}-\alpha} \geq \frac{1}{2}\left(\sum_{k=1}^{\infty}\frac{\Gamma(k+\frac{1}{2})^2}{\Gamma(k+1)^2} \right)^{\frac{1}{2}} = +\infty,
\]
and it heuristically matches the established result for $\alpha=\frac{1}{2}$,
\[
\frac{1}{2}\left(\sum_{k=1}^{n}\frac{\Gamma(k+\frac{1}{2})^2}{\Gamma(k+1)^2} \right)^{\frac{1}{2}} \sim \frac{1}{n}\left(\sum_{k=1}^{n}\frac{1}{k}\right)^\frac{1}{2} \sim \frac{1}{2}\log^\frac{1}{2} n \sim \|\sigma_n^{\frac{1}{2}}\|.
\]
Based on the estimations in the proof of Theorem \ref{T:alpha>12}, similar remarks can be made for $\alpha \to \frac{1}{2}^+$ and also for $\alpha \to 1$, which are consistent with the findings in \cite{MNW1}.

\item In Theorems \ref{T:alpha=12} and \ref{T:alpha<12}, we provided precise asymptotic values for $\|\sigma_n^\alpha\|$ whenever $\alpha=1/2$ and $\alpha<1/2$, respectively. In \cite{MR4507242}, the case $\alpha =0$ was studied in detail, and precise formulas for $\|\sigma_n^0\|$ were obtained in three different but equivalent norms. Therefore, the next step in this direction is to derive precise formulas for $\|\sigma_n^\alpha\|$, for a fixed $0<\alpha \leq 1/2$.

\item Handling the case $\alpha > 1/2$ presented some challenges. For other values of $\alpha$, the norm $\|\sigma_n^\alpha\|$ diverges as $n \to \infty$, allowing us to distinguish and isolate a dominant term which leads to an asymptotic formula for the growth at infinity. This feature enables us to disregard other terms that were overshadowed by the principal one. However, whenever $\alpha > 1/2$, the norm $\|\sigma_n^\alpha\|$ remains bounded as $n$ grows. Consequently, all contributing terms become important, making it more complex to determine the precise, or even approximate, value of $\|\sigma_n^\alpha\|$ as $n$ increases. The only case for which precise formulas have been derived is $\alpha=1$ \cite{MNW1}. Therefore, building upon insights from \cite{MNW1}, what is the exact formula for $\|\sigma_n^\alpha\|$, for a fixed $\alpha > 1/2$?
\end{enumerate}

\section*{Acknowledgments}
Parts of the work that contributed to this paper were conducted during the first author's visit to Universit\'{e} Laval, and he expresses his gratitude for their hospitality.
\\

\noindent \textbf{Competing interests}: The authors declare none.

\bibliographystyle{acm}
\bibliography{Cesaro-Operator-on-Dzeta-References}

\end{document}